\newtheorem{theorem}{Theorem}[section]
\newtheorem{lemma}{Lemma}[section]
\newtheorem{proposition}{Proposition}[section]
\numberwithin{equation}{section}
\begin{document}
\title{The ring of normal densities, Gelfand transform and smooth BKK-type theorems}
\author{Dmitri Akhiezer and Boris Kazarnovskii}
\address {Institute for Information Transmission Problems \newline
19 B.Karetny per.,127051, Moscow, Russia,\newline
{\rm D.A.:} 
{\it akhiezer@iitp.ru},  
{\rm B.K.:} 
{\it kazbori@gmail.com}.}
\thanks{MSC 2010: 52A39, 58A05}
\keywords{Crofton formula, $k$-density, mixed volume}
\thanks{Research was carried out at the Institute for Information Transmission Problems of the Russian Academy of Sciences}

\begin{abstract} We suggest an algorithm allowing to obtain some new integral-geometric formulae from the existing formulae of Crofton type. These new formulae are applied to get smooth versions of BKK theorem. The algorithm is based on the calculations
in the ring of normal densities on a manifold.
\end{abstract}
\renewcommand{\subjclassname}
{\textup{2010} Mathematics Subject Classification}
\maketitle
\renewcommand{\thefootnote}{}
\maketitle

\section{Introduction}\label{intro}

Normal densities on a manifold $X$ of dimension $n$ are the analogue of, on the one hand, differential forms on $X$ and,
on the
other hand, affine Crofton densities on ${\mathbb R}^n$ defined by I.M.Gelfand an M.M.Smirnov in \cite{GS}.
It is shown in \cite{AK} that normal densities form a graded commutative ring ${\mathfrak n}(X) $ depending
contravariantly on $X$. It is known that the proof of any specific formula of Crofton type
is reduced to the computation of a certain Gelfand transform $\Phi \mapsto p_*q^*\Phi $. Here,
$\Phi $ is a volume density on a manifold $\Gamma $ and $q^*, p_* $ are the pull-back and
the push-forward operations coming from a double fibration $Y\  {\buildrel p\over \longleftarrow}\ X\ {\buildrel q \over 
\longrightarrow} \ \Gamma$.
We prove that the density $p_*q^*\Phi $ arising from the double fibration is normal.
It turns out that for the product of double fibrations the corresponding 
normal densities are multiplied, see Theorem \ref{gelfand product}. This result leads to an algorithm
giving new integral-geometric formulae on the basis of existing formulae
of Crofton type. In particular, we deduce the Crofton formula for the product
of spheres or Euclidean spaces from the classical formulae for one sphere or one ${\mathbb R}^n$, see Sect.\ref{double}.
Given $n$ finite dimensional vector spaces of smooth functions on $X$, one can define, in many different ways, the
average number of common zeros of $n$ functions $f_i \in V_i$, $i=1,\ldots,n$. We refer to the computation
of such a number  as a smooth version of BKK theorem \cite{Be}. One such computation is done in \cite{AK}. In Sect.\ref{BKK} we give another 
result of this type. As in \cite{AK} and \cite{Be}, the answer depends on the mixed volume of some convex bodies.

\section{Preliminaries}\label{preliminaries}
We gather here some definitions and results from \cite{AK} which will be used later on. Let $V$ be a real vector space of dimension $n$.
Throughout the paper we use the following notations:

- ${\rm Gr}_m(V)$ is the Grassmanian, whose points are vector subspaces
of dimension $m$ in $V$;

 - ${\rm D}_m(V)$ is the cone over ${\rm Gr}_m(V)$, whose points are decomposable $m$-vectors;

- ${\rm AGr}^m(V)$ is the affine Grassmanian, whose points are affine
subspaces of codimension $m$ in $V$.

\medskip
\noindent
{\it The ring of normal measures.}
A signed Borel measure $\mu $ on ${\rm AGr}^m (V)$ is called {\it normal}, if $\mu $ is translation invariant and finite on compact sets. The space of normal measures on ${\rm AGr}^m(V)$ is denoted by ${\mathfrak m}_m(V)$. 
The graded vector space ${\mathfrak m}(V) = {\mathfrak m}_0(V) \oplus {\mathfrak m}_1(V) \oplus \ldots \oplus
{\mathfrak m}_n(V)$ has a structure of a graded ring. Namely,
let 
$${\mathcal D}_{p,q} = \{(G,H) \in {\rm AGr}^p(V) \times {\rm AGr}^q(V)\ \vert \ {\rm codim}\, G\cap H \ne p+q\}.$$
Then we have the mapping 
$$P_{p,q}: {\rm AGr}^p(V) \times {\rm AGr}^q(V) \setminus {\mathcal D}_{p,q} \to {\rm AGr}^{p+q}(V),$$
given
by $P_{p,q}(G,H) = G\cap H$. For $\mu \in {\mathfrak m}_p(V), \, \nu \in {\mathfrak m}_q(V), \, p+q \le n,$
the measure $\mu \cdot \nu \in{\mathfrak m}_{p+q}(V)$
is defined by
$(\mu \cdot \nu ) (D) = (\mu \times \nu)(P_{p,q}^{-1}(D)),$
where $D \subset {\rm AGr}^{p+q}(V)$ is a bounded domain.

\medskip
\noindent
{\it The ring of normal densities on a manifold.}
Recall that an $m$-density on $V$ is a continuous function 
$\delta : {\rm D}_m (V) \to {\mathbb R}$, such that $\delta (t\xi) = \vert t \vert \delta(\xi)$ for all $t\in {\mathbb R}$.
One example of an $m$-density is the absolute value of an exterior $m$-form.
An $m$-density on a manifold $X$ is an $m$-density $\delta _x$ on each tangent space $T_x$,
such that the assignment $x\mapsto \delta _x$ is continuous.
We consider a density on $V$ as a translation invariant density on the corresponding affine space.
Such a density is called {\it normal} if it is obtained from a normal measure
on ${\rm AGr}^m(V)$ by the construction which we now describe.  Namely, for $D \subset V$ put
$${\mathcal J}_{m,D} = \{H \in {\rm AGr}^m(V) \ \vert \ H \cap D \ne \emptyset \}$$
and  for $\xi_1,\ldots,\xi_m \in V$ denote by $\Pi_\xi \subset V$ the parallelotope generated by $\xi _i$.
Given $\mu \in {\mathfrak m}_m$,  define a normal $m$-density $\chi _m(\mu)$ by 
$$\chi _m(\mu)(\xi_1\wedge \ldots \wedge \xi_m ) = \mu ({\mathcal J}_{m,\Pi_\xi}).$$
Then we have the spaces of normal $m$-densities
${\mathfrak n}_m = {\mathfrak n}_m(V) , m = 0,1,\ldots,n$, their direct sum
${\mathfrak n}(V) =\oplus \, {\mathfrak n}_m(V)$ and the linear map
$\chi = \oplus \chi_m : {\mathfrak m}(V) \to {\mathfrak n}(V)$. It is easily seen that ${\rm Ker}\, \chi$
is a homogeneous ideal of the graded ring ${\mathfrak m}(V)$. Therefore ${\mathfrak n}(V)$ carries a structure of a graded ring, the {\it ring of normal densities on $V$}.

\medskip
\noindent
{\bf Remark.} {\it See \cite{AK}, Sect.\,3.3., for the connection of ${\mathfrak n}(V)$ with the ring of valuations
on convex bodies defined by S.\,Alesker \cite{Al}}.

\medskip
\noindent
 Let $X$ be a smooth manifold of dimension $n$. A normal $m$-density on $X$
is a continuous function $x \mapsto \delta _x \in {\mathfrak n}_m(T_xX)$.
The pointwise construction of product leads to the definition of {\it the ring of normal densities on $X$},
denoted by ${\mathfrak n}(X)$.
As an example of a normal density, we will prove in the next section
that the absolute value $\vert \omega \vert $ of a decomposable differential $m$-form $\omega $
is a normal $m$-density. Morever, one can show that for decomposable forms multiplication
of forms agrees with multiplicaion of densities. More precisely, if $\omega_1, \omega _2$ are decomposable forms then $\vert \omega _1 \wedge \omega  _2\vert = \vert \omega _1 \vert \cdot \vert \omega _2\vert$.

\medskip
\noindent
{\it Contravariance of ${\mathfrak n}(X)$}.
The assignment $X \to {\mathfrak n}(X)$ is a contravariant functor from the category of smooth manifolds to the
category of commutative graded rings. Namely, the pull-back of a normal density is normal
and the product of pull-backs is the pull-back of the product of normal densities. This property is based on the following
construction of {\it the pull-back of a normal measure}.  

For a linear map $\varphi: U \to V$ of vector spaces one can define the pull-back operation
$${\mathfrak m}_m (V) \ni \mu \mapsto \varphi^*(\mu)\in {\mathfrak m}_m(U),$$
so that $\chi _m (\varphi^*\mu ) = \varphi^*(\chi _m (\mu ))$.
For $\mu \in {\mathfrak m}_p(V), \nu \in {\mathfrak m}_q(V)$ one has
$$\chi _p(\varphi ^*\mu)\cdot \chi_q(\varphi ^*\nu) = \chi_{p+q}(\varphi ^*(\mu\cdot \nu)).$$
We need the notion of pull-back only for epimorphisms. If $\varphi $ is surjective then we have the closed 
imbedding 
$\varphi _*:{\rm AGr}^m(V) \to {\rm AGr}^m(U)$ defined by taking the preimage under $\varphi $ of an affine subspace in $V$. By definition, the measure $\varphi ^*\mu$ is
supported on $\varphi _*{\rm AGr}^m(V)$ and 
$$
\varphi^*\mu(\Omega) = \mu (\varphi _*^{-1}(\Omega ))$$ for any Borel set $\Omega \subset \varphi_*{\rm AGr}^m(V)$.
In other words,
$$\int_{\varphi _*{\rm AGr}^m(V)}f\cdot d(\varphi ^*\mu) = \int _{{\rm AGr}^m(V)} \tilde f \cdot d\mu$$
for any continuous function $f$ with compact support on $\varphi _*{\rm AGr}^m(V)$ and $\tilde f(H) = f(\varphi ^{-1}H)$,
$H \in {\rm AGr}^m(V)$.

\medskip
\noindent
{\it Normal 1-densities.} It is shown in \cite{AK} that any smooth 1-density is normal.
This follows from certain properties of the cos-transform, see, e.g., \cite{AB}. Therefore ${\mathfrak n}(X)$ contains
the ring generated by smooth 1-densities.

\medskip
\noindent
{\it Normal densities and convex bodies.} Let $A_1\ldots, A_m$ be convex bodies in the dual space $V^*$. The $m$-density
$d_m(A_1, \ldots, A_m)$ on $V$ is defined as follows. Let $H$ be the subspace of $V$ generated by $\xi _1, \ldots, \xi_m\in V$,
$H^\bot \subset V^*$  the orthogonal complement to $H$, and $\pi _H: V^* \to V^*/H^\bot $
the projection map.
Consider $\xi_1\wedge \ldots \wedge \xi_m$ as a volume form on $V^*/H^\bot $
and define $d_m(A_1, \ldots, A_m)(\xi _1, \ldots, \xi _m)$ as the mixed
$m$-dimensonal volume of $\pi_HA_1, \ldots, \pi_HA_m$.
For example, $d_1(A)(\xi) = h(\xi) - h(-\xi )$, where $h : V \to { R}$ is the support
function of a convex body $A$. If $A$ is a smooth convex body of full dimension then the density $d_1(A)$
is smooth and, consequently, normal. Using the pull-back operaion
for normal measures and densities, one can show the same if $A$ 
is a smooth body of smaller dimension. 

Suppose $A_1, \ldots, A_m$ are centrally symmetric smooth convex bodies. Then Theorem 7 in \cite{AK}
asserts that
\begin{equation}
d_1(A_1)\cdot \ldots \cdot d_1(A_m) = m!\,d_m(A_1, \ldots, A_m). \label{*}
\end{equation}
Thus all densities $d_m(A_1, \ldots, A_m)$ are normal.

\medskip
\noindent
{\it Finsler convex sets.} Let $X$ be a smooth manifold of dimension $n$. Suppose that for every $x \in X$ we are given
a convex body ${\mathcal E}(x) \subset T_x^*$ depending continuously on $x\in X$.
The collection ${\mathcal E} = \{{\mathcal E}(x) \, \vert \, x \in X\}$ is called a Finsler convex set in $X$.
For
$m$ Finsler convex sets ${\mathcal E}_1, \ldots, {\mathcal E}_m$ denote by
$D_m({\mathcal E}_1, \ldots, {\mathcal E}_m)\in {\mathfrak n}_m(X)$ the $m$-density whose value at $x\in X$
equals $d_m({\mathcal E}_1(x), \ldots, {\mathcal E}_m(x))$. 
By definition, $D_m({\mathcal E}) = D_m({\mathcal E}_1, \ldots, {\mathcal E}_m)$,
where ${\mathcal E}_i = {\mathcal E}, i=1,\ldots,m$.
The integral $\int _XD_n({\mathcal E})$ is the symplectic volume of the domain 
$$\bigcup _{x\in X}{\mathcal E }(x) \subset T^*(X)$$
with respect to the standard symplectic structure on the cotangent bundle.
The integral
$\int _XD_n({\mathcal E}_1, \ldots, {\mathcal E}_n)$ is called the mixed (symplectic) volume
of ${\mathcal E}_1, \ldots, {\mathcal E}_n$.
Under the assumptions of (\ref{*}) on $A_i = {\mathcal E}_i(x)$ at every $x \in X$ we have
\begin{equation}
D_1({\mathcal E}_1)\cdot \ldots \cdot D_1({\mathcal E}_m) = m!\, D_m({\mathcal E}_1, \ldots, {\mathcal E}_m). \label{**}
\end{equation}
Given a $C^\infty $ map $f : X \to Y$ and a Finsler convex set ${\mathcal E}$ in $Y$, one can define its inverse image,
a Finsler convex set in $X$,  
by $f^*{\mathcal E}(x) = d^*f({\mathcal E}(f(x))$, where $d^*f$
is the dual to the differential of $f$. It is easy to see that 
\begin{equation}
D_m(f^*{\mathcal E}_1, \ldots, f^*{\mathcal E}_m) = f^*(D_m({\mathcal E}_1, \ldots, {\mathcal E}_m)), \label{***}
\end{equation}
i.e., the operation of inverse image of Finsler convex sets commutes with the pull-back of densities $D_m$.

\section{Decomposable forms and normal densities}\label{decomp}

\bigskip

Let $V$  be a real vector space of dimension $n$ and $V^*$ the dual space.
Recall that an exterior form $\omega \in \bigwedge^m V^*$ is said to be {\it decomposable} if $\omega = \varphi _1 \wedge  \ldots  
\wedge \varphi_m$, where $\varphi _i$ are 1-forms. 
For a decomposable form $\omega \ne 0$ the subspace in $V^*$ generated by $\varphi _1, \ldots, \varphi_m$
has dimension $m$. 
Its orthogonal complement in $V$ is called the kernel of $\omega $ and denoted by
${\rm Ker}\, \omega$. Of course, ${\rm Ker}\, \omega $ is given by the equations $\varphi _1 = \ldots =\varphi _m = 0 $.
A volume form is a non-zero form of maximal degree. A volume form on a given vector space
is unique up to a scalar multiple and decomposable.

\begin{lemma}\label {decomp1} An $m$-form $\omega $ is decomposable if and only if $\omega $ is the pull-back of a volume form $\omega^\prime $ under an epimorphism $\pi : V \to V^\prime$.

\end{lemma}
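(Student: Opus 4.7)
My plan is to prove both implications directly, using the kernel of a decomposable form to construct the quotient epimorphism.

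For the "only if" direction, I would assume $\omega \neq 0$ is decomposable and write $\omega = \varphi_1 \wedge \ldots \wedge \varphi_m$ with $\varphi_i \in V^*$. As the excerpt already observes, nonvanishing of $\omega$ forces the $\varphi_i$ to be linearly independent, so they span an $m$-dimensional subspace $W \subset V^*$, and $\mathrm{Ker}\,\omega = W^{\perp}$ has dimension $n-m$. Take $V' = V/\mathrm{Ker}\,\omega$ and let $\pi : V \to V'$ be the quotient map, which is surjective. Because each $\varphi_i$ vanishes on $\mathrm{Ker}\,\omega$, it descends to a unique $\varphi_i' \in (V')^*$ with $\pi^*\varphi_i' = \varphi_i$. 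Injectivity of $\pi^*$ on $(V')^*$ then transfers linear independence from $\{\varphi_i\}$ to $\{\varphi_i'\}$, so $\omega' := \varphi_1'\wedge \ldots \wedge \varphi_m'$ is a nonzero top-degree form on the $m$-dimensional space $V'$, i.e.\ a volume form. Functoriality of pull-back on exterior powers gives $\pi^*\omega' = \omega$.

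For the "if" direction, suppose $\omega = \pi^*\omega'$ for an epimorphism $\pi : V \to V'$ and a volume form $\omega'$ on $V'$. Since $\omega'$ has maximal degree on $V'$, we have $\dim V' = m$, and the excerpt already notes that any volume form is decomposable; thus $\omega' = \psi_1 \wedge \ldots \wedge \psi_m$ for some $\psi_i \in (V')^*$. Then $\omega = \pi^*\psi_1 \wedge \ldots \wedge \pi^*\psi_m$ is a wedge of $1$-forms on $V$, hence decomposable.

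The main (mild) obstacle is really just the bookkeeping on the forward direction: identifying $V'$ canonically with the quotient by $\mathrm{Ker}\,\omega$ and verifying that the $\varphi_i$ descend to linearly independent forms on the quotient. One could also bypass this by extending $\varphi_1,\ldots,\varphi_m$ to a basis of $V^*$ and taking $\pi$ to be the first-$m$-coordinates projection in the dual basis, which avoids quotients altogether. Either approach is routine; the content of the lemma is really the equivalence, and neither direction involves more than linear algebra.
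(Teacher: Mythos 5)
Your proof is correct and follows the same route as the paper: pass to the quotient $V' = V/\mathrm{Ker}\,\omega$, descend the $1$-forms $\varphi_i$ to linearly independent forms on $V'$, and take their wedge as the volume form; the paper dismisses the converse as obvious while you spell it out. The extra details you supply (injectivity of $\pi^*$, the alternative coordinate-projection construction) are fine but not a different argument.
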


\begin{proof} Let $\omega $ be a decomposable $m$-form, $\omega = \varphi _1 \wedge \ldots \wedge \varphi _m$, 
and $V^\prime =V/{\rm Ker\, \omega} $. Denote by $\pi : V \to  V^\prime $ the canonical map. Then each $\varphi _i$ is the pull-back of
some $\varphi _i^\prime $, where $\varphi_1^\prime , \ldots,  
\varphi_m  ^\prime$ are linearly independent 1-forms on $V^\prime $. The wedge product $\omega ^\prime = \varphi _1^\prime \wedge
\ldots \wedge \varphi_m^\prime$ is a volume form on $V^\prime $ and its pull-back $\pi ^*\omega ^\prime $ is the initial form $\omega $.
The converse assertion is obvious.
\end{proof}

For any $\omega \in \bigwedge ^m V^*$ and $\xi = (\xi _1, \ldots, \xi _k ),\ \xi_i \in V,\ k\le m$, the inner product 
$\xi \lrcorner \, \omega \in \bigwedge ^{m-k} V^*$ is defined by
$$(\xi\lrcorner \, \omega) (v_1, \ldots, v_{m-k}) = \omega(\xi_1, \ldots, \xi_k, v_1, \ldots ,v_{m-k}).$$

\begin{lemma}\label{decomp2} If $\omega \ne 0$ is decomposable then $\xi \lrcorner \, \omega $ is also 
decomposable. If $\xi _1, \ldots, \xi _k $ are linearly independent modulo ${\rm Ker}\, \omega$
then 
$${\rm Ker}\, \xi \lrcorner \, \omega = {\rm Ker}\, \omega +{\mathbb R}\, \xi _1 + \ldots + {\mathbb R}\, \xi_k. \eqno{(*)}$$
Otherwise $\xi\lrcorner\, \omega = 0 $.
\end{lemma}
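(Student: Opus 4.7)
The plan is to reduce everything to a computation on the quotient $V' = V/\mathrm{Ker}\,\omega$ via Lemma \ref{decomp1}. Write $\omega = \pi^*\omega'$, where $\pi : V \to V'$ is the canonical projection and $\omega'$ is a volume form on the $m$-dimensional space $V'$. First I would record the naturality identity
\[
\xi_i\lrcorner(\pi^*\omega') \;=\; \pi^*\bigl(\pi(\xi_i)\lrcorner\omega'\bigr),
\]
which follows directly from the definition of pullback and contraction, and iterate it to obtain $\xi\lrcorner\omega = \pi^*\bigl(\pi(\xi)\lrcorner\omega'\bigr)$, where $\pi(\xi) = (\pi(\xi_1),\ldots,\pi(\xi_k))$.

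Next I would handle the degenerate case. If $\xi_1,\ldots,\xi_k$ are linearly dependent modulo $\mathrm{Ker}\,\omega$, then the vectors $\pi(\xi_i)\in V'$ are linearly dependent, so one of them, say $\pi(\xi_j)$, is a linear combination of the others. Contracting the alternating form $\omega'$ successively with linearly dependent vectors forces the result to vanish (after substituting, one sees the same vector contracted twice), so $\pi(\xi)\lrcorner\omega' = 0$ and hence $\xi\lrcorner\omega = 0$.

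For the generic case, assume $\pi(\xi_1),\ldots,\pi(\xi_k)$ are linearly independent in $V'$. Choose a basis $e_1,\ldots,e_m$ of $V'$ with $e_i = \pi(\xi_i)$ for $i\le k$, and let $e_1^*,\ldots,e_m^*$ be the dual basis, arranging $\omega' = c\, e_1^*\wedge\cdots\wedge e_m^*$ for some nonzero scalar $c$. A direct computation gives
\[
\pi(\xi)\lrcorner\omega' \;=\; \pm\, c\, e_{k+1}^*\wedge\cdots\wedge e_m^*,
\]
which is (up to a scalar) the pullback of a volume form under the further epimorphism $\sigma : V' \to V'/\mathrm{span}(e_1,\ldots,e_k)$. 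By Lemma \ref{decomp1} this form on $V'$ is decomposable with kernel $\mathbb{R}e_1+\cdots+\mathbb{R}e_k = \mathrm{span}(\pi(\xi_1),\ldots,\pi(\xi_k))$. Consequently $\xi\lrcorner\omega = (\sigma\circ\pi)^*\bigl(\text{volume form}\bigr)$, and applying Lemma \ref{decomp1} once more shows $\xi\lrcorner\omega$ is decomposable with kernel $\pi^{-1}\mathrm{span}(\pi(\xi_1),\ldots,\pi(\xi_k)) = \mathrm{Ker}\,\omega + \mathbb{R}\xi_1 + \cdots + \mathbb{R}\xi_k$, which is $(*)$.

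There is no real obstacle here: the only thing to watch is the bookkeeping of signs and the verification that contraction commutes with pullback along $\pi$. The conceptual content is entirely packaged in Lemma \ref{decomp1}, applied first to replace $\omega$ by a volume form on $V'$ and then applied again after the contraction to identify $\xi\lrcorner\omega$ as the pullback of a volume form under the composite epimorphism.
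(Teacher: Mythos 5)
Your proof is correct and follows essentially the same route as the paper: pass to $V' = V/\mathrm{Ker}\,\omega$ via Lemma \ref{decomp1}, use the identity $\xi\lrcorner\,\pi^*\omega' = \pi^*(\pi(\xi)\lrcorner\,\omega')$, and recognize the contraction as the pullback of a volume form on $V'/\mathrm{span}(\pi(\xi_1),\ldots,\pi(\xi_k))$ in the independent case, with vanishing otherwise. The explicit dual-basis computation you include is just a spelled-out version of what the paper leaves implicit.
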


\begin{proof} Let $\omega = \pi^*\omega ^\prime$ be the pull-back of $\omega ^\prime \in \bigwedge ^m V^\prime$ under an epimorphism $\pi: V \to V^\prime$.
Write $\pi (\xi)$ for $(\pi (\xi_1),\ldots,\pi(\xi_k))$. 
Then $\xi\lrcorner \,\omega = \pi^*(\pi(\xi)\lrcorner\, \omega ^\prime)$ and 
${\rm Ker}\, \xi \lrcorner \, \omega  = \pi^{-1} \bigl ({\rm Ker}\, \pi(\xi) \lrcorner \, \omega ^\prime \bigr )$.
By Lemma \ref{decomp1} we may assume that  $\omega ^\prime $ is a volume form on $V^\prime$. 
If $\pi(\xi_1), \ldots, \pi (\xi_k)$ are
linearly independent then $\pi (\xi)\lrcorner \,\omega ^\prime$ is the pull-back of a volume form on $V^\prime /U $, where
$U = {\mathbb R}\pi (\xi_1) +\ldots + {\mathbb R}\pi (\xi_k).$  Thus $\xi \lrcorner\, \omega$ is decomposable with kernel given by $(*)$. Otherwise 
$\pi (\xi) \lrcorner \, \omega ^\prime= 0 $ and $\xi \lrcorner \, \omega = 0 $.
\end{proof}

Suppose $\omega \ne 0$ is a decomposable $m$-form on $V$, where $1\le m \le n $. We associate with $\omega  $ a
normal measure
$\mu_\omega \in {\mathfrak m}_m(V)$. Namely, by Lemma \ref{decomp1}  we have $\omega = \pi^*\omega ^\prime $, where
$\pi : V \to V^\prime = V/{\rm Ker}\, \omega$  is the canonical quotient map and
$\omega ^\prime $ is a volume form on $ V^\prime$.   Let $\mu ^\prime $ be the corresponding Lebesgue measure on $V^\prime $, so that
$\chi _m(\mu ^\prime ) = \vert \omega ^\prime \vert$.
Applying
the pull-back operation for normal measures, we put
 $\mu  _\omega= \pi ^*\mu ^\prime \in {\mathfrak m}_m(V)$. 
Then
$$\chi _m (\mu _\omega) = \chi _m (\pi ^*\mu ^\prime)  = \pi^*(\chi _m \mu^\prime) = \pi^*(\vert\omega ^\prime \vert) = 
\vert \omega \vert ,$$  
showing that $\vert \omega \vert$ is a normal $m$-density.

For $\omega = 0$ we put $\mu _0 = 0$.
\begin{lemma} \label{cont}The map
$${\rm D}_{m} (V^*) \ni \omega \mapsto \mu_\omega \in {\mathfrak m}_m$$
is continuous.
\end{lemma}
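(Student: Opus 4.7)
The plan is to equip ${\mathfrak m}_m(V)$ with the vague topology: $\mu_n \to \mu$ means $\int f\, d\mu_n \to \int f\, d\mu$ for every continuous compactly supported function $f$ on ${\rm AGr}^m(V)$. The starting point is the explicit description of $\mu_\omega$ for $\omega\ne 0$: it is supported on the affine slice $S_\omega = \{H \in {\rm AGr}^m(V) : H\ \text{parallel to}\ {\rm Ker}\,\omega\}$, parametrized by $V/{\rm Ker}\,\omega$ via $v' \mapsto \pi_\omega^{-1}(v')$, and under this identification $\mu_\omega$ becomes the Lebesgue measure $\vert\omega'\vert$, where $\omega = \pi_\omega^*\omega'$. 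Consequently
$$\int f\, d\mu_\omega = \int_{V/{\rm Ker}\,\omega} f(\pi_\omega^{-1}(v'))\, d\vert\omega'\vert(v').$$

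To prove continuity at a fixed $\omega_0\ne 0$, I use local trivialization. Pick an $m$-dimensional subspace $L \subset V$ complementary to ${\rm Ker}\,\omega_0$. The set of decomposable $\omega$ with $L \cap {\rm Ker}\,\omega = 0$ is open and contains $\omega_0$; on this neighborhood the composition $\iota_\omega: L \hookrightarrow V \twoheadrightarrow V/{\rm Ker}\,\omega$ is an isomorphism, and the pullback $\iota_\omega^*\omega'$ is the restriction $\omega_L := \omega|_L \in \bigwedge^m L^*$, which depends polynomially on $\omega$. After change of variables,
$$\int f\, d\mu_\omega = \int_{L} f(u + {\rm Ker}\,\omega)\, d\vert\omega_L\vert(u).$$
The integrand is jointly continuous in $(\omega,u)$ because ${\rm Ker}\,\omega$ varies continuously in ${\rm Gr}_{n-m}(V)$ for $\omega$ near $\omega_0$. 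Since ${\rm supp}\,f$ is compact, the integrands vanish outside a fixed compact subset of $L$ uniformly in $\omega$, and dominated convergence yields continuity at $\omega_0$.

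For continuity at $\omega = 0$, I exploit the scaling relation $\mu_{t\omega} = \vert t\vert\,\mu_\omega$ (for $t \ne 0$), which follows from the corresponding scaling of Lebesgue measure on $V/{\rm Ker}\,\omega$. Given a test function $f$ with compact support, pick $R > 0$ so that every $H \in {\rm supp}\,f$ meets the closed ball $\bar B_R \subset V$; it then suffices to show $\mu_{\omega_n}({\mathcal J}_{m,\bar B_R}) \to 0$ as $\omega_n \to 0$. Enclosing $\bar B_R$ in an $n$-cube and applying the zonotope volume formula to the image of the cube under $\pi_\omega$ yields a bound $\mu_\omega({\mathcal J}_{m,\bar B_R}) \le C_R \Vert\omega\Vert$, linear in the coefficients of $\omega$ in any basis of $\bigwedge^m V^*$; the conclusion follows.

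The main obstacle is the case $\omega_0 \ne 0$: the supports $S_\omega$ shift inside ${\rm AGr}^m(V)$ as $\omega$ varies, so one cannot directly invoke weak convergence on a fixed space. The common complement $L$ trivializes a neighborhood of $S_{\omega_0}$ and transports all nearby integrals to the fixed model $L$; this is the key device. Verifying the polynomial dependence of $\omega_L$ on $\omega$ and the joint continuity of the integrand is then routine.
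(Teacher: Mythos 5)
Your proof is correct and follows essentially the paper's argument: both transport $\mu_\omega$ to a fixed subspace complementary to the kernels (the paper's $M$, your $L$), so that convergence reduces to convergence of the restricted forms $\omega\vert_L$ together with continuity of $u \mapsto u + {\rm Ker}\,\omega$ applied to a test function with compact support. The only divergence is at $\omega_0 = 0$: the paper folds this case into the same argument by extracting a convergent subsequence of kernels in the compact Grassmanian, whereas you treat it separately via the explicit bound $\mu_\omega({\mathcal J}_{m,\bar B_R}) \le C_R\Vert\omega\Vert$, a harmless and in fact slightly more quantitative variant.
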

\begin{proof} Let $\omega _i \to \omega $ as $i \to \infty $. We want to prove that $\mu _i=\mu _{\omega _i} \to \mu =\mu_\omega $.
Put $L_i = {\rm Ker}\, \omega _i$. By compactness of ${\rm Gr}_{n-m}(V)$ we may assume that $L_i \to L$,
where $L$ is some vector subspace of codimension $m$ in $V$. Note that if $\omega \ne 0$ then $L = {\rm Ker}\, \omega $. 
Take any subspace $M \subset V$, such that $M \oplus L = V$. Then $M\oplus L_i = V$ for large $i$, so that we
have the projection maps $\pi $ and $ \pi _i$ of $V$ to $M$ parallel to $L$ and, respectively, $L_i$.
Consider the restrictions of $\omega $ and $\omega _i$ onto $M$ and
denote by $\nu  $ and $\nu _i $ the corresponding measures
on $M$.
Take any continuous function $f$ with compact support on ${\rm AGr}^m(V)$ and define the functions
$\tilde f_i, \tilde f$ on $M$ by
$$ \tilde f_i(m) = f(m+L_i), \tilde f(m) = f(m+L), \ m\in M.$$
Observe that $\tilde f_i$ and $\tilde f$ have compact supports bounded altogether and $\tilde f_i \to \tilde f$.
Since $\omega _ i \vert _M\to \omega \vert _M$, we have
 $\nu _i \to \nu $. Therefore
$$\int _{{\rm AGr}^m (V)}  f\cdot d\mu_i = \int _M \tilde f_i \cdot d\nu_i \to
\int _M \tilde f \cdot d\nu = \int _{{\rm AGr}^m (V)}  f\cdot d\mu $$
by the definiton of pull-back for normal measures applied to 
$\mu = \pi^*\nu $ and $\mu _i= \pi^*_i \nu_i$.
\end{proof}

We now recall the definition of {\it push-forward} for densities, see \cite{AK},\cite{PF},\ and \cite{GS}. Let
$X,Y$ be smooth manifolds, $X$ a locally trivial fiber bundle over $Y$ with compact fiber,
$p: X \to Y$ the projection map, and $k$ the dimension of the fiber $F_y = p^{-1}(y),\ y \in Y$. Let $\delta $ be an $m$-density on $X$
and assume $k = {\rm dim}\,F_y \le m$. Then the push-forward $p_*\delta$ is an $(m-k)$-density  on $Y$,
defined as follows. Given $\eta _1, \ldots, \eta_{m-k} \in T_y(Y)$ and $x \in F_y$, choose arbitrarily $\xi_1, \ldots, \xi_{m-k} \in T_x(X)$,
so that $dp_x(\xi_i) = \eta _i$. Put $\xi = \xi_1 \wedge \ldots \wedge \xi_{m-k}$ and consider the 
density $\xi \lrcorner \,\delta $ on the fiber, defined by 
$$\xi\lrcorner \, \delta (f_1\wedge \ldots \wedge f_k) = \delta(\xi\wedge f_1 \ldots \wedge f_k),$$
where $f_1, \ldots, f_k$ are tangent to the fiber $F_y$ at $x$. Here, $\xi\lrcorner \, \delta (f_1\wedge \ldots \wedge f_k)$
does not depend on the choice of $\xi _i$. By definition,
$$p_*\delta (\eta _1 \wedge \ldots \wedge \eta_{m-k}) = \int _{F_{y}} \xi\lrcorner \, \delta.$$

An exterior $m$-form $\omega $ on $X$ is said to be {\it decomposable}  
if $\omega $ is the product of 1-forms on $X$. If $\omega $ is the product of 1-forms
in a neighborhood of every point then $\omega $ is called {\it locally decomposable}. For a locally decomposable $m$-form $\omega $ its absolute value $\delta = \vert \omega \vert$ is a normal $m$-density. We will say that a normal $m$-density $\delta $
{\it locally corresponds to a decomposable form} if
for every point $x \in X$
there is a neighborhood of $x$ in which $\delta $ is the absolute value of a decomposable $m$-form defined in that neighborhood. Note that a top-order density is normal and locally corresponds to a decomposable form. 

\begin{theorem}\label{normal}
Let $\delta $ be a normal $m$-density on $X$, which locally corresponds to a decomposable form, and let
$p : X \to Y$ be the projection map of a fiber bundle whose fiber $F_y, \, y \in Y,$ is a compact manifold
of dimension $k \le m$. Then the push-forward $p _*\delta $ is a normal $(m-k)$-density on $Y$. 
\end{theorem}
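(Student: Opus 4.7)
The plan is to verify normality of $p_*\delta$ pointwise at each $y \in Y$, by exhibiting a normal measure $\nu_y \in {\mathfrak m}_{m-k}(T_yY)$ with $\chi_{m-k}(\nu_y) = (p_*\delta)_y$; continuity of $y \mapsto (p_*\delta)_y$ will follow from smoothness of the bundle and continuity of $\delta$. First, I would reduce to a convenient global form. Shrinking $Y$ around $y$ and trivializing the bundle, cover a neighborhood of the compact fiber $F_y$ by finitely many open sets $U_\alpha$ on which $\delta|_{U_\alpha} = |\omega_\alpha|$ for a decomposable $m$-form $\omega_\alpha$, and choose a subordinate partition of unity $\{\rho_\alpha\}$. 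Since $|\rho_\alpha \omega_\alpha| = \rho_\alpha |\omega_\alpha|$ and $\rho_\alpha \omega_\alpha$ is a decomposable, compactly supported form, linearity of $p_*$ reduces matters to the case $\delta = |\omega|$ for a single compactly supported decomposable $m$-form $\omega$ on $X$.

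For the main construction, recall the assignment $\omega \mapsto \mu_\omega$ given just before Lemma \ref{cont}: each nonzero decomposable $\omega_x \in \bigwedge^m T_x^*X$ determines a normal measure $\mu_{\omega_x} \in {\mathfrak m}_m(T_xX)$ with $\chi_m(\mu_{\omega_x}) = |\omega_x|$, continuous in $x$ by Lemma \ref{cont}. The differential $dp_x: T_xX \to T_yY$ is a linear surjection with $k$-dimensional kernel $T_xF_y$. I would transport $\mu_{\omega_x}$ to a measure $\tilde\mu_x \in {\mathfrak m}_{m-k}(T_yY)$: $\mu_{\omega_x}$ is concentrated on translates $L + v$ of $L = {\rm Ker}\,\omega_x$, which $dp_x$ sends to $dp_x(L) + dp_x(v) \subset T_yY$, of codimension $m-k$ exactly when $L \cap T_xF_y = 0$. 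On the degenerate locus $L \cap T_xF_y \ne 0$, Lemma \ref{decomp2} forces $\xi \lrcorner \omega_x |_{T_xF_y} \equiv 0$ for every lift $\xi$, so this locus contributes nothing. Define $\nu_y$ by the weak integral
$$\int_{{\rm AGr}^{m-k}(T_yY)} f\,d\nu_y \; =\; \int_{F_y}\Bigl(\int_{{\rm AGr}^{m-k}(T_yY)} f\,d\tilde\mu_x\Bigr),$$
with the outer integral being fiber integration of the $k$-density canonically attached to $\omega_x$ by Lemma \ref{decomp2}. Compactness of $F_y$ and continuity of $x \mapsto \mu_{\omega_x}$ guarantee that $\nu_y$ is translation-invariant and locally finite, i.e. $\nu_y \in {\mathfrak m}_{m-k}(T_yY)$.

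The identification $\chi_{m-k}(\nu_y) = (p_*\delta)_y$ is then checked by evaluating both sides on $\eta = \eta_1 \wedge \cdots \wedge \eta_{m-k}$. The right hand side, per the push-forward formula, equals $\int_{F_y}(\xi \lrcorner |\omega|)$ for any lifts $dp_x(\xi_i) = \eta_i$; by Lemma \ref{decomp2} the integrand is the absolute value of the decomposable top $k$-form $(\xi \lrcorner \omega)|_{T_xF_y}$. The left hand side is $\nu_y({\mathcal J}_{m-k,\Pi_\eta})$, which unpacks via Fubini and the identity $\chi_m(\mu_{\omega_x})(\xi,\cdot) = |\omega_x(\xi,\cdot)|$ into the same fiber integral once one matches parallelotopes under $dp_x$.

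The main obstacle is giving a clean construction of $\tilde\mu_x$ and justifying the Fubini step. A naive set-theoretic push-forward of $\mu_{\omega_x}$ along a linear surjection with nontrivial kernel is ill-defined, since a Lebesgue family in the fiber direction would diverge. The cure is to define $\tilde\mu_x$ by duality against test functions on ${\rm AGr}^{m-k}(T_yY)$, so that the $k$-dimensional fiber integration absorbs the degeneracy; verifying weak continuity of $x \mapsto \tilde\mu_x$ (from Lemma \ref{cont}) and precisely matching the parallelotope computations on both sides of $\chi_{m-k}(\nu_y) = (p_*\delta)_y$ is where the technical effort of the proof concentrates.
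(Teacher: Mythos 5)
The gap is exactly where you locate it yourself: the measure $\tilde\mu_x\in{\mathfrak m}_{m-k}(T_yY)$ is never actually constructed. You introduce it as a ``transport'' of $\mu_{\omega_x}\in{\mathfrak m}_m(T_xX)$ along $dp_x$, then correctly note that pushing a normal measure forward along a surjection with $k$-dimensional kernel is ill-defined (the mass in the fiber directions is infinite), and the proposed cure --- define $\tilde\mu_x$ ``by duality so that fiber integration absorbs the degeneracy'' --- is circular as written: your defining formula for $\nu_y$ integrates test functions against $\tilde\mu_x$, the very object that still has no definition. Since this per-point measure on ${\rm AGr}^{m-k}(T_yY)$ is the heart of the theorem (the partition-of-unity reduction, the continuity statement via Lemma \ref{cont}, and the evaluation of $\chi_{m-k}(\nu_y)$ on parallelotopes are routine once it exists), this is a genuine missing step, not a technical loose end you may defer.

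The paper closes precisely this gap by never pushing a measure forward: it contracts the form first. Fix a frame $f=(f_1,\ldots,f_k)$ of $T_xF_y$; by Lemma \ref{decomp2}, $f\lrcorner\,\omega$ is a decomposable $(m-k)$-form on $T_xX$; restrict it to a complement $S$ of $T_xF_y$ and carry it to $W=T_yY$ by the isomorphism $dp_x\vert_S$ --- the resulting form $\omega_x$ on $W$ does not depend on $S$. Now apply the construction $\omega\mapsto\mu_\omega$ \emph{on $W$ itself}, getting $\mu_x=\mu_{\omega_x}\in{\mathfrak m}_{m-k}(T_yY)$ with $\chi_{m-k}(\mu_x)=\vert\omega_x\vert$. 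Because $f\lrcorner\,\omega$ depends linearly on $f_1\wedge\ldots\wedge f_k$ and $\mu_{t\omega}=\vert t\vert\,\mu_\omega$, the assignment $x\mapsto\mu_x$ transforms as a measure-valued $k$-density along the fiber, so $\mu=\int_{F_y}\mu_x$ is well defined; Lemma \ref{cont} gives continuity of $x\mapsto\mu_x$, compactness of $F_y$ gives finiteness, and evaluating $\chi_{m-k}(\mu)$ on $\eta_1\wedge\ldots\wedge\eta_{m-k}$ reproduces the push-forward formula, whence $\chi_{m-k}(\mu)=(p_*\delta)_y$. If you replace your undefined transport by this ``contract, descend, then take the measure downstairs'' step, the rest of your outline (including the correct remark that the locus where ${\rm Ker}\,\omega$ meets $T_xF_y$ contributes nothing, and your partition-of-unity reduction, which is harmless but not needed) essentially becomes the paper's proof.
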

\begin{proof}
Fix a point $x \in X$ and write $\delta = \vert \omega \vert$ in a neighborhood of $x$,
where $\omega $ is a decomposable $m$-form in that neighborhood. Put $V = T_x(X), W = T_y(Y)$, where $y = p(x)$, and $L = T_x(F_y)$.
Choose a complementary vector subspace $S$ to $L$ in $V$, so that $dp_x\vert _S $ is an isomorphism
 $S \buildrel \sim \over \to W$. By Lemma \ref{decomp2} $f\lrcorner\, \omega $
 is a decomposable $(m-k)$-form on $V$ for any $f=(f_1, \ldots, f_k)$. Restrict this form to $S$ and then carry it down to $W$
using the isomorphism $S\simeq W$. For $f_1, \ldots, f_k \in L$ the resulting 
form does not depend on the choice of $S$. We call this form $\omega _x$. By construction, $\omega _x$
is a decomposable $(m-k)$-form on $W$ and
$$\omega _x (\eta _1, \ldots, \eta _{m-k}) = (f \lrcorner \, \omega )(\xi_1, \ldots, \xi_{m-k}),$$
where
$\xi_1, \ldots, \xi_{m-k} \in S$ are uniquely determined by $dp_x(\xi_i) =\eta_i$.
Let $\mu _x = \mu_{\omega _x} \in {\mathfrak m}_{m-k}(W)$ be the distinguished normal measure 
associated with $\omega _x$, such that $\chi_ {m-k}(\mu _x )= \vert\omega _x\vert = \delta _x$. When $x$ varies along the fiber $F_y$, the family
of forms $\{\omega _x\}$ is continuous. By Lemma \ref{cont} the family of measures $\{{\mu}_x\} $ is also
 continuous. Since $\mu _x$ and $\delta _x$ depend on $f_1\wedge \ldots \wedge f_k \in \bigwedge^kT_x(F_y)$ as densities, we can define a normal measure by the 
integral 
$
\mu = \int _{F_y}\mu_x$. Then
$$\chi _{m-k}(\mu ) (\eta _1 \wedge \ldots \wedge \eta _{m-k}) = \int _{F_y}\chi_{m-k}(\mu_x)(\eta _1\wedge \ldots
\wedge  \eta _{m-k})=$$
$$= \int _{F_y}\delta_x (\xi_1, \ldots, \xi_{m-k})=
\int _{F_y} \xi\lrcorner \, \delta _x . 
$$
By the definition of push-forward this equals $p_*\delta (\eta _1, \ldots, \eta _{m-k}).$
\end{proof}

\noindent
{\bf Remark.}
{\it In the above notation, let $\omega _x \ne 0$ and  
and let $f_1,\ldots,f_k$ be any fixed basis of $T_x(F_y)$.
Then 
$\xi \lrcorner\, \omega _x (f_1, \ldots, f_k) = 0$ for all sets of vectors $\eta _1, \ldots, \eta _{m-k}$ if and only if 
$T_x(F_y) \cap {\rm Ker}\,\omega_x  \ne \{0\}$.}

\begin{proof} If  
$T_x(F_y) \cap {\rm Ker}\,\omega _x  \ne \{0\}$ then $f_1, \ldots , f_k$ are linearly dependent
modulo ${\rm Ker}\, \omega _x$ and $\xi \lrcorner\, \omega _x $ restricted to $T_x F_y$ is zero for all $\xi $.
If $T_x(F_y) \cap {\rm Ker}\,\omega _x  = \{0\}$ then 
${\rm codim}\, (T_x(F_y) + {\rm Ker}\, \omega _x) = m-k $ and we can find $\eta _1, \ldots, \eta_{m-k}$
whose liftings $\xi _1, \ldots, \xi_{m-k}$ are linearly independent modulo
$T_x(F_y) + {\rm Ker\, }\omega _x$. Then $\xi\lrcorner \, \omega _x (f_1, \ldots, f_k) \ne 0$.
\end{proof}

\noindent
This shows that the direct image $p_*\delta $ vanishes at $y$ if and only if the kernel of $\omega$
has non-zero intersection with $T_xF_y$ everywhere on $F_y$.

\section{The product theorem}

The product of normal measures $\mu $ on ${\rm AGr}^p(V)$ and $\nu $ on ${\rm AGr}^q(V)$ is a normal
measure $\mu \cdot \nu $ on ${\rm AGr}^{p+q}(V)$, see Section \ref{preliminaries}. 
We will apply this definition to $V= V_1 \oplus V_2$ and $\mu$, $\nu$
coming from $V_1$, $V_2$, respectively. More precisely, we 
consider the embeddings
$$\iota _1:{\rm AGr}^p(V_1) \to {\rm AGr}^p(V),\ \ G\mapsto G\oplus V_2,$$
$$\iota _2:{\rm AGr}^q(V_2) \to {\rm AGr}^q(V),\ \  H \mapsto V_1 \oplus H,$$
and identify ${\rm AGr}^p(V_1), {\rm AGr}^q(V_2)$ with their images under $\iota _1, \iota _2$.
For $\mu \in {\mathfrak m_p}(V_1)$ and $\nu \in {\mathfrak m_q}(V_2)$ we put 
$$\mu \cdot \nu = \iota _{1,*}(\mu )
\cdot \iota _{2,*}(\nu ) \in {\mathfrak m}_{p+q}(V).$$
Since $({\rm AGr}^p(V_1)\times{\rm AGr}^q(V_2))\cap{\mathcal D}_{p,q} = \emptyset $
and $P_{p,q} \cdot (\iota _1 \times \iota _2)$ is an embedding, we have $\mu \cdot \nu = \mu \times \nu $.

In what follows, we consider the product of two manifolds $X=X_1 \times X_2$
and use the same notation for differential forms on $X_1, X_2$ and their liftings to $ X$.
The same convention holds for densities.
Let $\delta _1 \in {\mathfrak n}_{m_1}(X_1), \delta _2\in {\mathfrak n}_{m_2}(X_2)$ be normal densities on $X_1, X_2$ which locally correspond to 
decomposable forms, $m= m_1 + m_2$, and
$\delta = \delta _1 \delta _2 \in {\mathfrak n}_m(X)$. Suppose $p_1:X_1 \to Y_1,\ p_2 : X_2 \to Y_2$ 
are two fiberings whose fibers are compact and have dimensions $k_1\le m_1$, $k_2 \le m_2$.
By Theorem \ref{normal} the push-forwards of $\delta _1, \delta _2$ and $\delta $
are normal densities.

\begin{theorem}\label{product}

One has $$(p_1\times p_2)_* (\delta) = p_{1*}(\delta _1)p_{2*}(\delta _2)$$
in the ring ${\mathfrak n}(Y_1\times Y_2)$.
\end{theorem}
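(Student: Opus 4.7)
My plan is to verify the identity at the level of the associated normal measures (via the map $\chi$ of the preliminaries). The claim is local on $Y_1 \times Y_2$, so I would first shrink to neighborhoods on which $\delta_i = |\omega_i|$ for decomposable forms $\omega_i$ on $X_i$; by the decomposable-product property recorded in the preliminaries, $\omega := \omega_1 \wedge \omega_2$ is then decomposable on $X_1 \times X_2$ and $\delta = |\omega|$. By the construction in the proof of Theorem~\ref{normal}, the two sides of the desired equality are determined at $(y_1, y_2)$ by the normal measures on $T_{y_1}Y_1 \oplus T_{y_2}Y_2$:
$$M = \int_{F_{y_1} \times F_{y_2}} \mu_{\omega_{(x_1,x_2)}}, \qquad N = \nu_1 \times \nu_2, \ \text{ where } \ \nu_i = \int_{F_{y_i}} \mu_{\omega_{i,x_i}},$$
the product in $N$ being the product of normal measures recalled at the start of the present section (which equals the ordinary product measure). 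By Fubini, the theorem therefore reduces to the pointwise identity $\mu_{\omega_{(x_1,x_2)}} = \mu_{\omega_{1,x_1}} \times \mu_{\omega_{2,x_2}}$ for every $(x_1,x_2) \in F_{y_1} \times F_{y_2}$.

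This pointwise identity is pure linear algebra, and I would split it into two steps. First, the carry-down construction of Theorem~\ref{normal} respects the direct-sum structure: choosing complements $S_i$ to $T_{x_i}F_{y_i}$ in $T_{x_i}X_i$ and bases of the fiber tangent spaces, evaluation of $\omega_{(x_1,x_2)}$ amounts to plugging a combined list of lifts and fiber vectors into $\omega_1 \wedge \omega_2$. Since each $\omega_i$ (extended via the projection to $X_1 \times X_2$) vanishes on any vector lying purely in the complementary factor, the shuffle expansion of the wedge collapses to a single product term, yielding $\omega_{(x_1,x_2)} = \pm\, \omega_{1,x_1} \wedge \omega_{2,x_2}$. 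Second, the assignment $\omega \mapsto \mu_\omega$ turns such a wedge into the product measure: if $\omega_i$ on $V_i$ is pulled back from a volume form $\omega'_i$ on $V_i/K_i$, then $\omega_1 \wedge \omega_2$ on $V_1 \oplus V_2$ is pulled back from $\omega'_1 \wedge \omega'_2$ on $(V_1/K_1) \oplus (V_2/K_2)$, and the associated Lebesgue measure coincides, via the embedding $(G,H) \mapsto G \oplus H$ described at the start of this section, with $\mu_{\omega_1} \times \mu_{\omega_2}$ (both are supported on the same set of translates of $K_1 \oplus K_2$ and have the same density). Combining the two steps gives the required pointwise identity.

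The hard part is the bookkeeping in the first step: tracking the shuffle signs and verifying that only the natural product term survives. Happily, signs disappear upon passing to $|\cdot|$ and the associated normal measures, so only the product structure matters. Once the pointwise identity is in hand, interchanging the product of measures with the fiber integral (Fubini) gives $M = N$, and applying $\chi$ concludes the proof.
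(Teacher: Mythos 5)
Your proposal is correct and follows essentially the same route as the paper: both reduce the identity at each $(y_1,y_2)$ to a pointwise statement about the distinguished normal measures $\mu_{\omega_x}$ from Theorem~\ref{normal}, invoke the observation from the start of the section that the product of normal measures supported on the two complementary summands of $T_{y_1}Y_1\oplus T_{y_2}Y_2$ is the direct product measure, and then interchange the fiber integral with the product via Fubini before applying $\chi$. The only difference is cosmetic — you spell out the carry-down and shuffle-sign bookkeeping (your step yielding $\omega_{(x_1,x_2)}=\pm\,\omega_{1,x_1}\wedge\omega_{2,x_2}$) that the paper compresses into the one-line assertion $\delta_{x_1}\cdot\delta_{x_2}=\delta_x$.
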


\begin{proof} Let $x_1 \in X_1, x_2 \in X_2$. As in the proof of 
Theorem \ref{normal}, construct the normal measures $\mu _{x_1}$ and $\mu _{x_2}$, such that
$\chi_{m_i-k_i}(\mu_{x_i}) = \delta _{x_i}, \ i=1,2.$ Then $\chi _{m-k}(\mu_{x_1}\cdot\mu_{x_2}) = \delta _{x_1}\cdot \delta
_{x_2} = \delta _x$. Now, $\mu _{x_1}$ and $\mu _{x_2}$ come from two different direct summands
of the decomposition $T_x(X) = T_{x_1}(X_1) \oplus T_{x_2}(X_2)$. Therefore, using the above observation, we
can replace the product of measures by the direct product, i.e.,
$\chi_{m -k}(\mu_{x_1}\times \mu _{x_2}) = \delta _x$.
Recall that $\mu _{x_1}$ and $\mu _{x_2}$ behave
as densities when $x_1$ and $x_2$ vary along the fibers $F_{y_1}=p_1^{-1}(y_1)$ and $F_{y_2} = 
p_1^{-1}(y_2)$, where $y_i 
=p_i(x_i)$.
We have
$$ \int _{F_{y_1}\times F_{y_2}} \mu_{x_1}\times\mu_{x_2}= \int _{F_{y_1}} \mu_{x_1} \times \int_{F_{y_2}}\mu_{x_2}.
$$
Take two decomposable covectors
$\eta _1 \in \bigwedge^{m_1-k_1}(T_{y_1}(Y_1))$ and
$\eta _2 \in \bigwedge^{m_2-k_2}(T_{y_2}(Y_2))$, apply $\chi $
and evaluate the both sides of the equality on $\eta_1\wedge\eta_2$ as in the proof of Theorem \ref{normal}. This yields the result.
\end{proof}

\section{Double fibrations}\label{double}
Our goal are certain Crofton formulae, and we start by recalling the approach using double fibrations \cite{PF}, \cite{GS}.  
A double fibration is a diagram  
$$\begin {matrix}
&&\ X && \cr
&&p \swarrow\ \searrow q && \cr
&&\ Y \hspace{30 pt} \Gamma \,,&& \cr 
\end{matrix}$$
in which $X,Y$, and $\Gamma $ are smooth manifolds, $p: X\to Y$ and $q: X \to \Gamma $ are projection maps of locally trivial fiber bundles.
The fiber of $p$ is supposed to be compact. Let $k$ be its dimension and let $\Phi$ be an $m$-density on $\Gamma $, where $m \ge k$.
Then the $(m-k)$-density $p_*q^*\Phi $ on $Y$ is called the Gelfand transform of $\Phi $.

Suppose 
that $p\times q: X \to Y\times \Gamma$ is a smooth embedding and 
$$Y_\gamma :=p(q^{-1}(\gamma )) \subset Y \ \ {\rm and}\ \ 
\Gamma _y : =q(p^{-1}(y)) \subset \Gamma $$ are smooth submanifolds for all $\gamma \in \Gamma, y \in Y$.
If $\Phi $ is a top-order density, i.e., if $m = {\rm dim}\, \Gamma $, then the Gelfand transform
is especially important. Namely, assume that all submanifolds $Y_\gamma $ have codimension $n=m-k$ and
let $M \subset Y$ be a submanifold of dimension $n$. Then, if the integral 
$\int _{\gamma \in \Gamma} {\#}(M\cap Y_\gamma) \Phi $ is finite, it gives the average number 
of intersection points of $M$ and $Y_\gamma $ with respect to $\Phi$. 
On the other hand, the above integral
equals $\int _M p_*q^*\Phi $, see \cite {PF}.  
\begin{theorem}\label{gelfand product}
Let 
$$Y_i \ {\buildrel p_i\over \longleftarrow} \ X_i {\buildrel q_i\over \longrightarrow} \ \ \Gamma_i,\ i=1,2,\ldots,n$$
be double fibrations, $\Phi _i$ top-order densities on $\Gamma _i$, and $\Omega _i $ their Gelfand transforms on $Y_i$.
Consider the product double fibration
$$Y=Y_1\times\ldots\times Y_n \ {\buildrel p\over \longleftarrow }\ X=X_1\times \ldots \times X_n \ {\buildrel q 
\over \longrightarrow} \ \Gamma = \Gamma_1\times \ldots \times \Gamma_n,$$
where $p=p_1\times \ldots \times p_n,\ q=q_1\times \ldots \times q_n$.
Let $\Phi = \Phi _1 \cdot \ldots \cdot \Phi _n$ be the corresponding top-order density on $\Gamma $ and
$\Omega $ its Gelfand transform on $Y$.
Then 
$\Omega $ and $\Omega_i,\ i=1,\ldots,n,$ are normal densities and 
$$\Omega = \Omega _1 \cdot \ldots \cdot \Omega _n$$
in the ring ${\mathfrak n}(Y)$.

\end{theorem}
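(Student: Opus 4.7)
The plan is to reduce the theorem to Theorem~\ref{normal} and Theorem~\ref{product}. Two things must be established: that each Gelfand transform $\Omega_i$ and the product transform $\Omega$ are normal densities, and that the product formula $\Omega = \Omega_1 \cdots \Omega_n$ holds in ${\mathfrak n}(Y)$.

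For the normality of the $\Omega_i$, I would start from the observation (made in Section~\ref{decomp}) that any top-order density is automatically normal and locally corresponds to a decomposable form, since volume forms are decomposable. Writing $\Phi_i = |\omega_i|$ locally for a local volume form $\omega_i$ on $\Gamma_i$, we get $q_i^*\Phi_i = |q_i^*\omega_i|$ locally, and $q_i^*\omega_i$ is again a wedge of pulled-back 1-forms, hence decomposable. Thus $q_i^*\Phi_i$ is a normal $m_i$-density on $X_i$ to which Theorem~\ref{normal} applies, yielding normality of $\Omega_i = p_{i*}q_i^*\Phi_i$.

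For the product formula, the next step is to note that by contravariance of ${\mathfrak n}$ and compatibility of pull-back with multiplication of normal densities,
$$q^*\Phi \;=\; q_1^*\Phi_1 \cdot \ldots \cdot q_n^*\Phi_n$$
as normal densities on $X$. Each factor is locally $|q_i^*\omega_i|$, and the wedge of these decomposable forms coming from different factors of the product is again decomposable; so $q^*\Phi$ also locally corresponds to a decomposable form. I would then apply Theorem~\ref{product} iteratively, grouping
$$X = X_1 \times (X_2 \times \cdots \times X_n), \qquad p = p_1 \times (p_2 \times \cdots \times p_n),$$
to obtain $p_*q^*\Phi = p_{1*}(q_1^*\Phi_1) \cdot (p_2 \times \cdots \times p_n)_*(q_2^*\Phi_2 \cdots q_n^*\Phi_n)$, and continuing inductively on $n$ yields $\Omega = \Omega_1 \cdots \Omega_n$.

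The main obstacle is tracking the ``locally corresponds to a decomposable form'' hypothesis, which Theorems~\ref{normal} and~\ref{product} both require. The rescue in our situation is that the input densities are top-order, hence locally absolute values of (automatically decomposable) volume forms, and that both pull-back along a smooth map and wedge-product across different factors of a product preserve decomposability. A subtle point worth verifying explicitly is the identity $|q_i^*\omega_i| = q_i^*|\omega_i|$ at the level of normal densities, i.e., that the density pull-back defined via normal measures in Section~\ref{preliminaries} agrees with the naive pull-back of the underlying decomposable form. This follows from the naturality of the assignment $\omega \mapsto \mu_\omega$ constructed in Section~\ref{decomp}, together with the identity $\chi_m(\varphi^*\mu) = \varphi^*(\chi_m(\mu))$ recalled in the preliminaries.
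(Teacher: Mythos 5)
Your proposal is correct and follows essentially the same route as the paper, whose proof is simply the single sentence that the theorem ``is a direct consequence of Theorem~\ref{product}.'' You have merely spelled out the details the paper leaves implicit: that $q_i^*\Phi_i$ locally corresponds to a decomposable form (being the pull-back of a local volume form), that the product $q^*\Phi = q_1^*\Phi_1\cdots q_n^*\Phi_n$ again locally corresponds to a decomposable form, that Theorem~\ref{normal} then gives normality, and that Theorem~\ref{product} applied inductively on the number of factors gives the product formula.
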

\begin{proof}
This is a direct consequence of Theorem \ref{product}.
\end{proof}

\noindent
We will give two examples when the Gelfand transform can be computed explicitly. 

1)\,{\it The product of spheres.} The one sphere case is classical, see \cite{Sa}. The case of several spheres was considered
in \cite{AK}, but we want to give another proof. 
For $i = 1,\ldots, n$ let $E_i$ be an Euclidean space with scalar product $\langle .,\rangle_i$, $S_i \subset E_i$ the unit sphere,
$Y _i= \Gamma _i=S_i$, and 
$$ X_i= \{(y_i,\gamma_i) \in S_i\times S_i \ \vert \  \langle y_i, \gamma _i\rangle _i = 0\},\ \ i= 1,\ldots,n.$$
Then we have the projection maps
$p_i: X_i \to Y_i, \ p_i(y_i,\gamma _i) = y_i,$
and 
$q _i: X_i \to \Gamma_i, \ q_i(y_i,\gamma_i) = \gamma _i,$
defining the double fibrations. The tangent space to $S_i$ at $y_i \in S_i$ is identified
with the orthogonal complement to ${\mathbb R}\cdot y_i$ in $E_i$.
For a tangent vector $\xi = (\xi_1, \ldots, \xi_n)$ to $Y = S_1\times \cdots\times S_n$
define ${\rm vol}_{1,i}(\xi ) = \sqrt{\langle \xi _i, \xi_i\rangle}$. This is a 1-density on $Y$.
Note that these 1-densities are normal, so they can be multiplied.
Take the volume density $\Phi _i$ on $S_i$,
lift it to the product $\Gamma = S_1 \times \ldots \times S_n$, and denote the lift again by $\Phi _i$. Define $p, q$ and
$\Phi $ as in the preceding theorem.
\begin{theorem}[\rm \cite {AK}, Theorem 8]
The Gelfand transform $\Omega $ of the volume density $\Phi $ is given by
$$\Omega = {1\over \pi ^n}\cdot {\rm vol} _{1,1} \cdot \ldots \cdot {\rm vol}_{1,n}.$$
\end{theorem}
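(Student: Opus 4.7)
The plan is to use Theorem \ref{gelfand product} to reduce the $n$-sphere statement to the classical Crofton formula on a single sphere, and then combine the factors.

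First, I would identify the diagram in the theorem as the product of the $n$ individual double fibrations $S_i \leftarrow X_i \rightarrow S_i$, and note that $\Phi = \Phi_1 \cdots \Phi_n$ is the product, in $\mathfrak{n}(\Gamma)$, of the (lifts of the) volume densities $\Phi_i$. Each fiber of $p_i$ is the compact sphere $S^{n_i-2}$, of dimension $\le n_i - 1 = \deg \Phi_i$, so Theorem \ref{normal} guarantees that each $\Omega_i = p_{i*}q_i^*\Phi_i$ is a normal 1-density on $S_i$. Theorem \ref{gelfand product} then yields the multiplicative identity
$$\Omega = \Omega_1 \cdots \Omega_n \quad \text{in } \mathfrak{n}(Y),$$
reducing the task to proving the one-sphere identity $\Omega_i = (1/\pi)\,{\rm vol}_{1,i}$ on each factor.

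For the single-sphere step I would use $O(E_i)$-symmetry together with a single pointwise calculation. Both $p_i$ and $q_i$ are $O(E_i)$-equivariant and $\Phi_i$ is invariant, so $\Omega_i$ is an $O(E_i)$-invariant 1-density on the homogeneous space $S_i$; since the isotropy $O(y_i^\perp)$ acts irreducibly on $T_{y_i}S_i$, the space of invariant 1-densities is one-dimensional and spanned by ${\rm vol}_{1,i}$. Hence $\Omega_i = c_i\,{\rm vol}_{1,i}$ for a single constant $c_i$. To pin down $c_i$, I would fix a unit $\xi_i \in T_{y_i}S_i$, choose at each $\gamma_i \in F_{y_i}\cong S^{n_i-2}\subset y_i^\perp$ a lift $\tilde\xi_i \in T_{(y_i,\gamma_i)}X_i$ (the natural choice kills the tangent components along $y_i^\perp\cap\gamma_i^\perp$), and plug into the push-forward formula. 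A short linear-algebra computation using Lemma \ref{decomp2} shows that the integrand $\tilde\xi_i \lrcorner\, q_i^*\Phi_i$ along the fiber reduces to $|\langle \xi_i,\gamma_i\rangle_i|$ times the induced volume on $F_{y_i}$, and the resulting integral of $|\langle \xi_i,\gamma_i\rangle_i|$ over $S^{n_i-2}$ evaluates, by a standard beta-function computation, to $1/\pi$. This is the classical one-sphere Crofton constant cited as \cite{Sa}.

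The main obstacle is essentially absent: Theorem \ref{gelfand product} does the conceptual work, and the single-sphere constant is either a routine beta-function integral or a direct citation. The real gain of this approach, compared with the argument in \cite{AK}, is that multiplicativity in the ring $\mathfrak{n}(Y)$ replaces one high-dimensional push-forward on $X = \prod X_i$ by a product of $n$ independent one-dimensional ones, each of which is already in the classical literature.
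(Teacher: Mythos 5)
Your proposal follows the paper's proof essentially verbatim: both reduce the claim via Theorem \ref{gelfand product} (i.e.\ Theorem \ref{product}) to the one-sphere identity $p_{i*}q_i^*\Phi_i = \tfrac{1}{\pi}\,{\rm vol}_{1,i}$ and then invoke the classical spherical Crofton formula. The only difference is cosmetic — the paper simply cites that formula, while you sketch its derivation via $O(E_i)$-equivariance and a beta-function integral; this is a correct elaboration of the cited fact but not a different route.
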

\begin{proof}
Let $\delta _i = q_i^*(\Phi _i)$ and $\delta = q^*(\Phi)$.
Then  $\delta = \delta_1 \cdot \ldots \cdot \delta_n$ and
$$p_*q^*(\Phi ) = p_*(\delta ) = p_{1*}(\delta _1)\cdot \ldots \cdot p_{n*}(\delta _n) $$
by Theorem \ref{gelfand product}.
Finally, 
$$p_{i*}(\delta _i)
={1\over \pi}{\rm vol}_{1,i}, \ \ \ i=1,\ldots,n,$$
by the Crofton formula for curves on the sphere.
\end{proof}

2)\,{\it The product of Euclidean spaces.} For a finite number of vector spaces $V_1, \ldots, V_n$
put $Y = V_1 \times \ldots \times V_n$, $\Gamma = {\rm AGr}^1(V_1)\times \ldots \times {\rm AGr}^1(V_n)$
and
$$X = X_1 \times \ldots \times X_n,\ \ X_i = \{(v_i, L_i) \in V_i \times {\rm AGr}^1(V_i) \vert \  v_i \in L_i\}.$$
We have the projection maps
$p_i:X_i\to Y_i,\ q_i:X_i \to \Gamma _i,\ i=1,\ldots,n,$
defining $n$ double fibrations. As in Theorem \ref{gelfand product} put
$$ p=p_1\times \ldots \times p_n : X \to Y,\ \ q =q_1 \times \ldots \times q_n: X \to \Gamma.$$ 
Now, introduce a Euclidean metric on each $V_i$. 
Let ${\rm vol}_{1,i}(v_i)$ be the length of $v_i\in V_i$, a 1-density on $V_i$. 
Take the volume density $\Phi _i$ on ${\rm AGr}^1(V_i)$, invariant under rotations and
translations and normalized by the condition that the volume of the set
of all hyperplanes intersecting a segment of length 1 is equal to 1. Define the volume density
on $\Gamma $ by $\Phi = \Phi _1 \cdot \ldots \cdot \Phi_n$ and let $\Omega =  p_*q^*\Phi$ be the Gelfand transform of $\Phi$ .

\begin{theorem} \label{gelfand-euclid} The Gelfand transform $\Omega $ of the volume density $\Phi $
is given by
$$\Omega = {\rm vol}_{1,1}\cdot \ldots \cdot {\rm vol}_{1,n}.$$

\end{theorem}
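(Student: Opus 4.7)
The plan is to reduce to the single-factor case via Theorem \ref{gelfand product} and then invoke the classical Crofton formula for affine hyperplanes in one Euclidean space. First I would verify the hypotheses of Theorem \ref{gelfand product}: each $\Phi_i$ is a top-order density on $\Gamma_i = {\rm AGr}^1(V_i)$ and hence locally corresponds to a decomposable form, so its pullback $q_i^*\Phi_i$ does too. The fiber $p_i^{-1}(v_i)$ is the set of affine hyperplanes in $V_i$ passing through $v_i$, which is naturally a projective space and hence compact of dimension $\dim V_i - 1$; the count $\dim \Gamma_i - \dim p_i^{-1}(v_i) = 1$ confirms that each $\Omega_i$ is a 1-density. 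Theorems \ref{normal} and \ref{gelfand product} then give $\Omega = \Omega_1 \cdots \Omega_n$ in ${\mathfrak n}(Y)$.

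Next I would show that $\Omega_i = {\rm vol}_{1,i}$ for each $i$. The whole single-factor double fibration is equivariant under the group of affine isometries of $V_i$: translations act on ${\rm AGr}^1(V_i)$ by translating affine hyperplanes, and $O(V_i)$ acts as well, preserving $X_i$ and both projections. Since $\Phi_i$ is invariant under this group, so is $\Omega_i = p_{i*}q_i^*\Phi_i$. Any 1-density on $V_i$ that is both translation and rotation invariant must be a nonnegative constant multiple of the Euclidean length density ${\rm vol}_{1,i}$, so $\Omega_i = c_i \cdot {\rm vol}_{1,i}$ for some $c_i \geq 0$.

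To pin down $c_i$ I would use the Crofton interpretation of the Gelfand transform recalled just above the theorem: for any smooth curve $M \subset V_i$ of finite length,
\[ \int_M \Omega_i \;=\; \int_{L \in \Gamma_i} \#(M \cap L)\, d\Phi_i(L). \]
Take $M$ to be a straight segment of length $1$. The right-hand side is then the $\Phi_i$-measure of the set of hyperplanes meeting $M$, which equals $1$ by the chosen normalization of $\Phi_i$, while the left-hand side equals $c_i$. Hence $c_i = 1$ and $\Omega_i = {\rm vol}_{1,i}$, so by the product formula $\Omega = {\rm vol}_{1,1} \cdots {\rm vol}_{1,n}$.

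The substantive content lives entirely in the one-factor case, where the main point is the invariance-and-normalization calculation; the step from one factor to $n$ factors is automatic from Theorem \ref{gelfand product}. I do not foresee a serious obstacle, provided the Crofton identity above is applied in the correct dimensional form (the test submanifold $M$ must be $1$-dimensional, matching the codimension of the fibers $Y_\gamma = L$ in $V_i$), and provided one is careful that the unit-segment normalization of $\Phi_i$ literally matches the definition in the statement.
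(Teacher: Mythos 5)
Your proof is correct and follows essentially the same route as the paper: reduce to one factor via Theorem \ref{gelfand product} and then use the one-factor identity $p_{i*}q_i^*\Phi_i = {\rm vol}_{1,i}$. The only difference is that the paper simply cites the classical Crofton formula for curves in a Euclidean space at this point, whereas you re-derive it from the invariance of $\Omega_i$ under affine isometries together with the unit-segment normalization of $\Phi_i$ --- a harmless elaboration.
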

\begin{proof}
The proof is the same as the proof of the preceding theorem. Apart from Theorem \ref{gelfand product},
it uses the classical Crofton formula for curves in a Euclidean space, which in this case yields
$p_{i *}q_i^*(\Phi _i) = {\rm vol}_{1,i}$ for all $i = 1,\ldots, n$.
\end{proof} 

\section {Two versions of BKK theorem for smooth functions} \label{BKK}

Let $X$ be a manifold of dimension $n$ and let
$V_1, \ldots, V_n$ be finite dimensional vector spaces of real $C^\infty $ functions
on $X$. Assume that each $V_i$ is equipped with a scalar product. In \cite{AK} we defined the average number of solutions of the system $f_1 = \ldots = f_n = 0$,
where $f_i \in V_i$, and proved that this number is equal to the mixed symplectic volume of certain
Finsler ellipsoids in $X$. A similar result for $X={\mathbb R}^n$
is obtained in \cite{ZK}. Our proof is based on the Crofton formula for the product of spheres
and the identity (\ref{**}).
Here, we will prove another version of BKK theorem based on the Crofton formula
for the product of Euclidean spaces. Namely, we consider 
the system of equations of the form
\begin{equation}
f_1 - a_1 = f_2 - a_2 = \ldots = f_n - a_n = 0, \label{syst}
\end{equation}
where $f_i \in V_i,\, f_i \ne 0,\,a_i \in {\mathbb R}$.
To define the average number of solutions over all such systems, denote by $V_i^*$ the dual Euclidean space to $V_i$
and regard $f_i$ as a linear functional $f_i : V_i^* \to {\mathbb R}$. System (\ref{syst}) can be viewed as a set of hyperplanes 
$H_i\subset V_i^*$ having equations $f_i = a_i$. The number of solutions is denoted by $N(H_1, \ldots, H_n)$.
The average number of solutions is the integral of $N(H_1,\ldots, H_n)$ over the product of
affine Grassmanians ${\rm AGr}^1(V_i^*)$ with respect to the invariant measure defined in Section \ref{double}.

Let $\theta _i : X \to V_i^*$ be the map assigning to any $x \in X$
the functional
$$\theta _i(x)(f) = f(x),\ \ f\in V_i.$$
Define a Finsler convex set ${\mathcal R}_i$ in $V_i^*$ by the condition that ${\mathcal R}_i(u)$ is the unit ball in $V_i$ for all $u \in V_i^*$, so that 
$$\bigcup _{u\in V_i^*}\ {\mathcal R}_i(u) \subset T^*V_i^* = V_i^*\times V_i$$ is the product of 
$V_i^*$ and the unit ball in $V_i$. Let ${\mathcal E}_i$ be the inverse image $\theta _i^*{\mathcal R}_i$.
Then ${\mathcal E}_i(x)$ is an ellipsoid in $T^*_xX$, possibly of smaller dimension. For all ${\mathcal E}_i$
we have the 1-densities $D_1({\mathcal E}_i)$, see Section \ref{preliminaries}. 

\begin{proposition} The avarage number of solutions of systems (\ref{syst}) is equal to
$2^{-n}\int _XD_1({\mathcal E}_1)\cdot \ldots \cdot D_1({\mathcal E}_n)$.
\end{proposition}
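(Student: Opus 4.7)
The plan is to reformulate the count of solutions of (\ref{syst}) via the evaluation map and then apply Theorem \ref{gelfand-euclid}.

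First, I would set $\theta = \theta_1 \times \cdots \times \theta_n : X \to Y := V_1^* \times \cdots \times V_n^*$ and invoke the product double fibration of Theorem \ref{gelfand-euclid} with $V_i$ replaced by $V_i^*$. The fiber of that product fibration over $\gamma = (H_1, \ldots, H_n) \in \Gamma := {\rm AGr}^1(V_1^*) \times \cdots \times {\rm AGr}^1(V_n^*)$ is $Y_\gamma = H_1 \times \cdots \times H_n$. Since $f_i(x) = \theta_i(x)(f_i)$, the solutions of (\ref{syst}) with $H_i = \{f_i = a_i\}$ are exactly the points of $\theta^{-1}(Y_\gamma)$, so $N(H_1, \ldots, H_n) = \#\theta^{-1}(Y_\gamma)$.

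Second, after pulling the Crofton identity of Section \ref{double} back through $\theta$, I would write
$$\int_\Gamma N(H_1, \ldots, H_n)\, \Phi = \int_X \theta^*\Omega,$$
where $\Omega = p_*q^*\Phi$ is the Gelfand transform of $\Phi = \Phi_1 \cdots \Phi_n$. Applying Theorem \ref{gelfand-euclid} with $V_i^*$ in place of $V_i$ gives $\Omega = {\rm vol}_{1,1}\cdots {\rm vol}_{1,n}$.

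Third, I would identify $\theta^*\Omega$ with $2^{-n}\prod_i D_1({\mathcal E}_i)$. A direct application of the definition of $d_1$ to the Euclidean unit ball $B_i \subset V_i$, viewed as a convex body in the dual of $V_i^*$, yields $d_1(B_i)(\xi) = 2\|\xi\|_i$ for $\xi \in V_i^*$ (the width of $B_i$ in direction $\xi$), so $D_1({\mathcal R}_i) = 2\,{\rm vol}_{1,i}$. Formula (\ref{***}) then gives $D_1({\mathcal E}_i) = \theta_i^* D_1({\mathcal R}_i) = 2\,\theta_i^*{\rm vol}_{1,i}$, and multiplying in $\mathfrak{n}(X)$ via contravariance of the pull-back,
$$D_1({\mathcal E}_1)\cdots D_1({\mathcal E}_n) = 2^n\,\theta^*\Omega.$$
Integrating over $X$ and combining with the second step yields the claim.

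The main obstacle I anticipate is the second step: the Crofton identity is recalled in Section \ref{double} for test \emph{submanifolds} $M \subset Y$, whereas here $\theta$ need not be an embedding (and $N$ can be infinite on a negligible set of $\gamma$'s). I would resolve this by observing that, since $\Omega = p_*q^*\Phi$ is obtained from $\Phi$ by fiber integration, the pull-back $\theta^*\Omega$ integrates on $X$ to the same quantity as counting preimages $\theta^{-1}(Y_\gamma)$ against $\Phi$ on $\Gamma$ --- a coarea-type statement requiring no injectivity hypothesis on $\theta$.
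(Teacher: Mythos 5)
Your proposal follows essentially the same route as the paper: the proposition is deduced from the Crofton formula for the product of Euclidean spaces (Theorem \ref{gelfand-euclid}) together with the compatibility (\ref{***}) of pull-back with the densities $D_1$, your explicit computation $d_1(B_i)(\xi)=2\Vert\xi\Vert_i$ being exactly the source of the factor $2^{-n}$ that the paper leaves implicit. The only divergence is at the point you flag yourself: where you appeal to a coarea-type statement to dispense with injectivity of $\theta$, the paper treats the non-embedding case by Sard's lemma as in \cite{AK}, Sect.~4.4, so your argument is correct and at the same level of detail as the original.
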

 
\begin{proof} Assume first that $\theta _i$ is an embedding. Then the assertion follows from
Crofton formula for the product of Euclidean spaces (Theorem \ref{gelfand-euclid}) and the property 
of normal densities with respect to inverse image of Finsler convex sets, see (\ref{***}).
If $\theta _i$ is not an embedding then the proof follows by application of Sard's lemma as
it is done for the first version of BKK theorem, see \cite{AK}, Sect. 4.4.
\end{proof}

\begin{theorem} The average number of solutions of systems (\ref{syst}) is equal to the mixed symplectic volume 
of Finsler ellipsoids ${\mathcal E}_1, \ldots, {\mathcal E}_n$ multiplied by $n!/ {2^n}$.
\end{theorem}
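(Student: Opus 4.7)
The plan is to start from the formula furnished by the preceding proposition, namely
\[
\text{average number of solutions} \; = \; 2^{-n}\int_X D_1(\mathcal{E}_1)\cdot \ldots \cdot D_1(\mathcal{E}_n),
\]
and to convert the product of $1$-densities into the mixed $n$-density $D_n(\mathcal{E}_1,\ldots,\mathcal{E}_n)$ using identity (\ref{**}). Once this conversion is in place, integration over $X$ produces exactly the mixed symplectic volume, and the numerical factor $n!/2^n$ appears from combining $2^{-n}$ with the $n!$ in (\ref{**}).

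More concretely, I would first observe that for each $i$ and each $x\in X$ the fiber $\mathcal{E}_i(x)=d^*\theta_i(\mathcal{R}_i(\theta_i(x)))$ is the image, under a linear map $d^*\theta_i\colon V_i\to T_x^*X$, of the unit ball of $V_i$. Unit balls are centrally symmetric and smooth, and linear images of centrally symmetric sets are centrally symmetric, so each $\mathcal{E}_i(x)$ is a centrally symmetric ellipsoid. Hence the hypotheses of (\ref{**}) are in force (pointwise, and then globally for the Finsler convex sets), giving
\[
D_1(\mathcal{E}_1)\cdot \ldots \cdot D_1(\mathcal{E}_n) \; = \; n!\, D_n(\mathcal{E}_1,\ldots,\mathcal{E}_n)
\]
in ${\mathfrak n}_n(X)$. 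Integrating over $X$ and inserting this into the proposition yields
\[
\text{average}\; = \; \frac{n!}{2^n}\int_X D_n(\mathcal{E}_1,\ldots,\mathcal{E}_n) \; = \; \frac{n!}{2^n}\,\operatorname{Vol}_{\mathrm{mix}}(\mathcal{E}_1,\ldots,\mathcal{E}_n),
\]
which is the asserted formula.

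The one place that needs care, and which I expect to be the main obstacle, is that $\mathcal{E}_i(x)$ can be of smaller dimension than $T_x^*X$ when $d^*\theta_i$ fails to be surjective (for instance when $\theta_i$ is not an immersion at $x$). The identity (\ref{**}) was stated under the assumption that the $\mathcal{E}_i(x)$ are smooth convex bodies, i.e., full-dimensional. To handle degenerate ellipsoids I would argue exactly as in the preliminaries: by functoriality of the densities $d_m$ (and hence $D_m$) with respect to pull-backs, see (\ref{***}), the identity descends from the full-dimensional situation over $V_i^*$, where the unit ball is genuinely a smooth centrally symmetric convex body, to the possibly degenerate pull-backs on $X$. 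Alternatively one may invoke Sard's lemma, as in the proof of the proposition, to reduce to the locus on which all $\theta_i$ are immersions and the ellipsoids $\mathcal{E}_i(x)$ are full-dimensional. Either route reduces the theorem to a direct application of (\ref{**}).
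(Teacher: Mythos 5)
Your proposal is correct and follows essentially the same route as the paper, which likewise deduces the theorem directly from the preceding proposition together with identity (\ref{**}). Your additional care about lower-dimensional ellipsoids is consistent with how the paper's preliminaries already handle that case via the pull-back construction, so no gap remains.
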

\begin{proof} This follows from the preceding result and equality (\ref{**}).
\end{proof}

\begin{thebibliography}{References}
\bibitem[1]{AK} D.\,Akhiezer, B.\,Kazarnovskii, {\it Average number of zeros and mixed symplectic volume of Finsler sets},
Geom. Funct. Anal., vol. 28 (2018), pp.1517--1547.

\bibitem[2]{Al} S.\,Alesker, {\it Theory of valuations on manifolds: a survey}, Geom. Funct. Anal., vol. 17 (2007), p. 1321--1341.  

\bibitem[3]{AB} S.\,Alesker, J.\,Bernstein, {\it Range characterization of the cosine tranform on higher Grassmanians},
Advances in Math. 184, no. 2 (2004), pp.367--379.  

\bibitem[4]{PF} J.-C.\,\' Alvarez Paiva, E.\,Fernandes,
{\it Gelfand transforms and Crofton formulas}, Selecta Math., vol. 13, no.3
(2008), pp.369 -- 390.

\bibitem[5]{Be} D.N.\,Bernstein, {\it The number of roots of a system of equations}, Funct. Anal. Appl. 9, no.2 (1975),
pp.95--96 (in Russian); Funct. Anal. Appl. 9, no.3 (1975), pp.183--185 (English translation).

\bibitem[6]{GS} I.M.\,Gelfand, M.M.\,Smirnov, {\it Lagrangians satisfying Crofton formulas, Radon transforms,
and nonlocal differentials}, Advances in Math. 109, no.2 (1994), pp. 188 -- 227.
  
\bibitem [7]{Sa} L.A.\,Santal\'o, {\it Integral Geometry and Geometric 
Probability}, Addison-Wesley, 1976.

\bibitem [8]{ZK} D.\,Zaporozhez, Z.\,Kabluchko, {\it Random determinants, mixed volumes of ellipsoids,
and zeros of Gaussian random fields}, Journal of Math. Sci., vol. 199, no.2 (2014), pp. 168--173.

\end {thebibliography}
\end {document}